\theoremstyle{plain}
\newtheorem{theorem}{Theorem}
\newtheorem{lemma}{Lemma}
\theoremstyle{definition}
\theoremstyle{example}
\theoremstyle{remark}
\numberwithin{equation}{section} \numberwithin{lemma}{section}
\numberwithin{figure}{section} \numberwithin{theorem}{section}
\numberwithin{proposition}{section}
\begin{document}
\begin{center}
{\bf\Large The Expected Order of Saturated RNA Secondary Structures}\\
\vspace{15pt} Emma Yu Jin\footnote{The work of this author has been
supported by the Alexander von Humboldt Foundation by a postdoctoral
research fellowship.} and  Markus E. Nebel\footnote{Author to whom
correspondence should be addressed.}
\end{center}

\begin{center}
Department of Computer Science\\
University of Kaiserslautern,\\
67663 Kaiserslautern, Germany\\
Email: \{jin,nebel\}@cs.uni-kl.de
\end{center}

\centerline{\bf Abstract}

{\small \quad\: Over the last 30 years the development of RNA
secondary structure prediction algorithms have been guided and
inspired by corresponding combinatorial studies where the RNA
molecules are modeled as certain kind of planar graphs. The other
way round, new algorithmic ideas gave rise to interesting
combinatorial problems asking for a deeper understanding of the
structures processed. One such example is the notion {\em order} of
a secondary structure as introduced by Waterman (as a parameter on
graphs) in 1978, which reflects a structure's overall complexity:
Regarding so-called {\em hairpin-loops} as the building blocks of a
secondary structure, the order provides information on the
(balanced) nesting-depth of hairpin-loops and thus on the overall
complexity of the structure. In related prediction algorithms, one
first searches for order $1$ structures, increasing the allowed
order step by step and thus
considering an improved structural complexity in every iteration.\\
Subsequently, Zucker {\it et al.} and Clote introduced a more
realistic combinatorial model for RNA secondary structures, the
so-called {\em saturated secondary structures}. Compared to the
traditional model of Waterman, unpaired nucleotides (vertices) which
are in favorable position for a pairing do not exist, i.e.~no base
pair (edge) can be added without violating at least one restriction
for the graphs. That way, one major shortcoming of the traditional
model has been cleared. However, the resulting model gets much more
challenging from a mathematical point of view. As a consequence, so
far only little is
known about the combinatorics of RNA saturated structures.\\
In this paper we show how it is possible to attack saturated
structures and especially how to analyze their order. This is of
special interest since in the past it has been proven to be one of the most
demanding parameters to address (for the traditional model it has
been an open problem for more than 20 years to find asymptotic
results for the number of structures of given order and similar). We
show the expected order of RNA saturated secondary structures of
size $n$ is $\log_4n\left(1+O\left(\frac{\log_2n}{n}\right)\right)$,
if we select the saturated secondary structure uniformly at random.
Furthermore, the order of saturated secondary structures is sharply
concentrated around its mean. As a consequence saturated structures
and structures in the traditional model behave the same with respect
to the expected order. Thus we may conclude that the traditional
model has already drawn the right picture and conclusions inferred
from it with respect to the order (the overall shape) of a structure remain valid
even if enforcing saturation (at least in expectation).}

{\bf Keywords}: Horton-Strahler number, generating function, Hankel
contour, Transfer Theorem, singularity analysis.

{\bf Date}: July 2011

\section{Introduction}
The building blocks of RNA are four different nucleotides
$\{a,c,g,u\}=\Sigma$ which are linked to each other in a linear
fashion. Accordingly, the so-called primary structure of RNA,
i.e.~the linear sequence of building blocks, is modeled as string
over $\Sigma$. In addition, non-neighboring nucleotides have a
second means of binding by which certain combinations of nucleotides
($a-u$, $c-g$ and $g-u$) may form pairs, i.e.~stick to each other.
This gives rise to a 3D folding of the molecule which in many cases
determines its biological function. Each such pair reduces the
so-called free energy of the molecule and the conformation of
minimal free energy is adopted in nature. Today, lab techniques to
determine the primary structure of RNA are cheap and efficient while
determining the 3D structure still is a time-consuming and expensive
task. Accordingly one aims for algorithms to predict the structure
from the sequence. However, even if building on rather simple models
for the free energy, its minimization becomes an ${\mathcal
NP}$-complete problem when allowing arbitrary foldings
\cite{Lyngso}. As a consequence, the set of considered structures is
constrained and so-called secondary structures are considered as the
first step towards understanding RNA biological function. There,
only non-crossing pairings of nucleotides are allowed such that --
ignoring types of nucleotides -- the molecule can be represented as
a planar graph \cite{Waterman:78} (see Figure~\ref{F:secandsat}) or
alternatively by strings over $\{ ., (, )\}$ where a $.$ represents
an unpaired nucleotide and a pair of corresponding brackets
represents two paired nucleotides (the left structure of
Figure~\ref{F:secandsat} is in correspondence with
$((..(((......))).))\,$). Even if computing a structure of minimum
free energy (mfe) becomes efficient for secondary structures
(algorithms with cubic time-bounds are well-known), the empiric
thermodynamic data used are incomplete and erroneous such that
suboptimal solutions need to be taken into consideration
\cite{Matt:99}. Computing the suboptimal structures is not
difficult, however, the number of potentially interesting suboptimal
conformation grows exponentially with the length of the nucleotide
sequence. As one possible solution, Zucker and Sankoff suggested to
restrict secondary structure folding to structures whose stacking
regions (runs of consecutive brackets) extend maximally in both
directions. This led to the definition of {\em saturated} structures
for which no base pair can be added without violating the
restrictions for secondary structures, see Figure~\ref{F:secandsat}.
\begin{figure}[ht]
\centerline{%
\epsfig{file=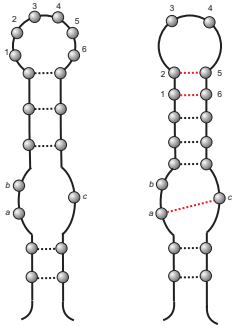,width=0.23\textwidth}\hskip15pt }
\caption{\small Secondary structure (left) and its saturated
counterpart (right) where three additional links have been added
(highlighted in red). The primary structure is given by the chain of
vertices along the solid, pairs of nucleotides are represented by
dotted edges. Note that 3 and 4 cannot be paired since both are
neighbored with respect to primary structure.} \label{F:secandsat}
\end{figure}
Extending the runs of consecutive brackets clears one mayor
shortcoming of the traditional model, i.e.~of secondary structures,
which compared to native molecules tends to have ways too short
stacking regions. Furthermore, in light of the asymptotic number of
saturated structures determined by Clote {\it et al.}
\cite{Clote:09}, the run time of RNA prediction algorithm should be
substantially reduced if the search for suboptimal foldings is
limited to saturated structures only, as observed by
Bompf\"{u}newerer {\it et al.} for so-called canonical structures
\cite{Bomp:08}.

Clote initiated the combinatorial study of saturated structures
\cite{Clote:06} which gets much more challenging than that for
secondary structures from a mathematical point of view. He estimated
the number of saturated structures by applying implicit function
theory to the functional equations of its generating function $S(z)$
\cite{Clote:09}, i.e.,
$$-S(z)^3z^4-S(z)^2z^2(-2+z^2)+S(z)(-1+z^2)+z(1+z)=0,$$
whereas the functional equation for secondary structures is
relatively simple and given by
$$T(z)=z+zT(z)+z^2T(z)+z^2T(z)^2,$$
for $T(z)$ the generating function of secondary structures. Of
course we observe variations of local parameters of the structures
like the length and number of stacking regions or the length and
number of loops (runs of symbols $.$). However, it is not at all
obvious whether saturation has an effect on the overall shape of the
structures. One parameter which allows to measure their overall
shape is the so-called {\em order}, originally introduced by
Waterman in 1978 for algorithmic purposes. A secondary structure $s$
(saturated or not, represented in dot-bracket form) has order $p$ if
we need exactly $p$ iterations of deleting all maximal substrings
$(^k)^k$ within $\phi(s)$ in parallel to find the empty string
$\varepsilon$. Here $\phi$ is the homomorphism implied by
$\phi(()=($, $\phi())=)$ and $\phi(.)=\varepsilon$. Accordingly, the
order provides information on the (balanced) nesting-depth of
so-called hairpin-loops (substring with $\phi$-image $(^n)^n$ which
e.g.~holds for the structures depicted in Figure~\ref{F:secandsat})
and thus on the overall complexity of the structure (it was used by
algorithms to increasingly consider more and more complex foldings
starting with a search space restricted to structures of order $1$).

In this paper we show one way to approach the combinatorics of
saturated structures and especially how to analyze their order. This
-- besides the motivating remarks from above -- is of special
interest since in the past it has been proven to be one of the most
demanding parameters to address (for secondary structures it has
been an open problem for more than 20 years to find asymptotic
results for the number of structures of given order and similar).
For that purpose we discuss the generating function of saturated
structures having order $\ge p$, denoted by $S_p(z)$, from which we
extract the information of the expected order of a saturated
structure of given size. We find that in expectation the order
behaves the same for secondary and saturated structures such that we
may conclude that the traditional model (secondary structures) has
already drawn the right picture and conclusions inferred from it
with respect to the order (the overall shape) of a structure remain
valid even if enforcing saturation (at least in expectation).

The paper is organized as follows. We first present our main
results. Afterwards we describe a streamlined analysis with details
delayed till the last sections (or the appendix).

\section{Main Results}
Let $S(n)$ be the number of saturated RNA secondary structures of
size $n$ and $S_p(n)$ be the number of saturated RNA secondary
structures of size $n$ and having order $\ge p$, then we set $\xi_n$
to be the random variable having probability distribution
$$\mathbb{P}(\xi_n=p)=\frac{S_p(n)-S_{p+1}(n)}{S(n)},$$ namely we
select each saturated structure uniformly at random
among the family of saturated RNA secondary structures of size $n$.
Our main results are summarized as
\begin{theorem}\label{T:exporder}
The expected order of a saturated RNA secondary structure of size $n$ is
$$\mathbb{E}\xi_n=\log_4n\cdot\left(1+O\left(\frac{\log_2n}{n}\right)\right).$$
\end{theorem}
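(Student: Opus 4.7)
The expectation is reorganized via the tail-sum identity
\[
\mathbb{E}\xi_n=\sum_{p\ge 1}\mathbb{P}(\xi_n\ge p)=\sum_{p\ge 1}\frac{S_p(n)}{S(n)},
\]
and the plan is to locate the cut-off value of $p$ at which the ratios $S_p(n)/S(n)$ transition from close to $1$ to close to $0$; this cut-off is to be identified with $\log_4 n$.  The strategy is in the spirit of the Flajolet--Raoult--Vuillemin analysis of the Horton--Strahler number of binary trees and of earlier treatments of ordinary secondary structures, but the functional equation governing saturated structures is substantially more involved, as witnessed by Clote's cubic relation for $S(z)$ quoted in the introduction, so new care is needed.

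The first step is to derive a functional equation for $S_p(z)$.  Clote's cubic is obtained by decomposing a saturated structure along its outermost base pair or unpaired base while enforcing the maximal-stacking condition.  Refining the decomposition by tracking the order, using the fact that a structure has order $\ge p$ iff it contains either one substructure of order $\ge p$ or two substructures of order $\ge p-1$ nested in parallel, should yield a polynomial relation
\[
P\bigl(z,S(z),S_{p-1}(z),S_p(z)\bigr)=0
\]
with $P$ a fixed polynomial independent of $p$.  This exposes $S_p(z)$ as algebraic over $\mathbb{Q}(z,S(z))$ and sets the stage for an iterative singularity analysis.

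The next step is the singularity analysis.  Let $\rho$ denote the (known) square-root singularity of $S(z)$.  Positivity and continuity arguments applied to the above recurrence show that the dominant singularity $\rho_p$ of $S_p(z)$ satisfies $\rho_p>\rho$ and $\rho_p\downarrow\rho$; linearizing the recurrence around $(\rho,S(\rho))$ and using the square-root behaviour of $S$ should give
\[
\rho_p-\rho\;=\;\kappa\cdot 4^{-p}\bigl(1+O(4^{-p})\bigr),
\]
the base $4$ arising from the interplay between the two-substructure branching rule (factor $2$) and the square-root nature of the singularity of $S(z)$ (another factor $2$).  A Hankel contour adapted to $\rho_p$ combined with the Transfer Theorem should then deliver, uniformly for $p$ in the relevant range,
\[
\frac{S_p(n)}{S(n)}\;=\;F\bigl(n(\rho_p-\rho)\bigr)\bigl(1+O(1/n)\bigr),
\]
for a universal profile $F$ with $F(0^+)=1$ and $F(x)$ decaying exponentially as $x\to\infty$.

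The main obstacle will be the uniform control of this transfer estimate as $\rho_p\to\rho$: the Hankel contour shrinks with $p$, and the implied constants must be tracked so that the $O(1/n)$ error survives throughout the whole range $1\le p\lesssim\log_4 n$.  Once this uniformity is established, splitting the sum at $p^{*}:=\lfloor\log_4 n\rfloor$ reduces $\sum_p S_p(n)/S(n)$ to a geometric series of values of $F$: for $p\ll p^{*}$ each ratio is $1-o(1)$, contributing $\log_4 n+O(1)$ to the expectation; for $p\gg p^{*}$ the exponential decay of $F$ contributes $O(1)$ in total.  A careful accounting of all error terms is what pins down the unusually sharp $O(\log_2 n/n)$ relative error in the statement, and the same uniform estimates can be reused to derive the announced sharp concentration of $\xi_n$ around its mean.
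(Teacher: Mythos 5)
The central analytic step of your plan is incorrect. You claim that the dominant singularity $\rho_p$ of $S_p(z)$ satisfies $\rho_p>\rho$ with $\rho_p-\rho\asymp 4^{-p}$, and you build the entire transfer argument on Hankel contours adapted to these moving singularities and a profile $F\bigl(n(\rho_p-\rho)\bigr)$. But $S_p(z)=S(z)-S_{\le p-1}(z)$, where $S_{\le p-1}(z)$, the generating function of saturated structures of order at most $p-1$, is a \emph{rational} function (exactly as for the bounded register function of binary trees); a rational function cannot cancel the algebraic square-root singularity of $S(z)$, so $S_p(z)$ has the \emph{same} dominant singularity $z_0\approx 0.4247$ as $S(z)$ for every $p$ --- this is precisely what the paper establishes in its Step~1. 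Hence there is no shrinking contour and no profile in $n(\rho_p-\rho)$: what varies with $p$ is not the location of the singularity of $S_p$ but the size of $S_p(z)$ near the fixed singularity. (The moving-pole picture you describe pertains to $S_{\le p}$, not to $S_p$; even if you recast the argument in terms of the poles of the rational functions $S_{\le p}$, the uniformity in $p$ that you yourself flag as ``the main obstacle'' is the whole difficulty and is left unresolved, and the sharp relative error $O(\log_2 n/n)$ is only asserted, not derived.)

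For comparison, the paper's route is: derive a fixed rational recurrence in $p$ for $R_p=z^2S_p$ (your ``fixed polynomial relation independent of $p$'' is right in spirit, cf.~eq.~(\ref{E:recR})), keep $z$ fixed in a $\Delta$-domain near the common singularity $z_0$ (analytic continuation follows from $S_p$ being algebraic of degree $3$ over $Q(z)$, hence D-finite), and analyze the recurrence in $p$ uniformly in $z$. A threshold $p_M(z)\approx-\log_2\left|\frac{P_R(R-1)}{a_2}\right|\asymp-\log_2\sqrt{1-z/z_0}$ separates three regimes: for $p\le p_M$ one has $S_{p+1}(z)=s_p(z_0)2^{-p}-\frac{1}{z_0^2}\frac{P_R(R-1)}{2a_2}+\text{errors}$ (Lemma~\ref{L:expanS}), a transition window around $p_M$ (Lemma~\ref{L:phasep}), and doubly exponential decay beyond it (Lemma~\ref{L:largep}). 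Summing over $p$ yields the singular expansion $\sum_{p\ge1}S_p(z)=F_0-c\,\sqrt{1-\frac{z}{z_0}}\,\log_2\frac{1}{1-z/z_0}+O\bigl(\sqrt{1-z/z_0}\bigr)$, and a single application of the Transfer Theorem at $z_0$, divided by the square-root asymptotics of $[z^n]S(z)$, gives $\mathbb{E}\xi_n=\log_4 n\,(1+O(\log_2 n/n))$. Thus the $\log_4 n$ emerges from the factor $p_M\sim\frac12\log_2\frac{1}{1-z/z_0}$ multiplying the square-root term, not from singularities receding at rate $4^{-p}$; to repair your proposal you would have to abandon the moving-singularity mechanism and carry out this fixed-singularity, uniform-in-$p$ analysis (or an equivalent one for the rational family $S_{\le p}$).
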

Theorem~\ref{T:exporder} indicates that although the saturation of
secondary structures increases the expected number of paired bases
(and therefore increases the number of hairpin-loops possible) and
scales down the search space, the complexity of the folding
algorithm for saturated structures as given by the order stays
almost the same. We may conclude that the traditional secondary
structure model has already drawn the right picture and conclusions
inferred from it with respect to the order of a structure (its
overall shape) remain valid even if enforcing saturation (at least
in expectation).

Theorem~\ref{T:tail} below proves $\xi_n$ is highly concentrated
around the expected order $\mathbb{E}\xi_n$.
\begin{theorem}\label{T:tail}
Assume we choose $0\le x\le(\frac{1}{2}-\beta)\log_4 n$ for
arbitrary $\beta>0$, then we have
$$\mathbb{P}(|\xi_n-\mathbb{E}\xi_n|\ge x)=O(2^{-x}).$$
\end{theorem}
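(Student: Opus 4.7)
The plan is to bound both tails of $\xi_n$ separately and then combine them by a union bound. Since $\mathbb{P}(\xi_n\ge p) = S_p(n)/S(n)$, everything boils down to sharp estimates for this ratio in two regimes: $p$ slightly above $\mathbb{E}\xi_n$ and $p$ slightly below it.

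For the upper tail $\mathbb{P}(\xi_n\ge \mathbb{E}\xi_n + x)$, I would reuse the singularity analysis of $S_p(z)$ developed for Theorem~\ref{T:exporder}. The derivation of $\mathbb{E}\xi_n = \sum_{p\ge 1} S_p(n)/S(n) = \log_4 n\,(1+o(1))$ must produce a uniform asymptotic expression of the form $S_p(n)/S(n) = \Lambda(p - \log_4 n) + \varepsilon_n(p)$, where $\Lambda$ is a limiting profile satisfying $\Lambda(y)=O(2^{-y})$ as $y\to+\infty$. This geometric decay with rate $\tfrac12$, rather than $\tfrac14$, is in fact forced by the shape of $\mathbb{E}\xi_n$: summing $\Lambda(y)$ over integers $y$ has to yield a bounded quantity, which is precisely what allows $\mathbb{E}\xi_n$ to coincide with $\log_4 n$ to leading order. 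Setting $p = \lceil \mathbb{E}\xi_n + x\rceil$ and applying this estimate gives $\mathbb{P}(\xi_n\ge \mathbb{E}\xi_n+x) = O(2^{-x})$ as soon as $\varepsilon_n(p)$ is negligible.

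For the lower tail $\mathbb{P}(\xi_n\le \mathbb{E}\xi_n - x) = 1 - S_q(n)/S(n)$ with $q = \lceil \mathbb{E}\xi_n - x + 1\rceil$, one uses the dual behaviour of $\Lambda$ at $-\infty$, namely $1 - \Lambda(y) = O(2^{y})$ as $y\to -\infty$. This too is consistent with the identity for $\mathbb{E}\xi_n$: the contribution of $p\ll\log_4 n$ must essentially equal $p$ itself (each such term in $\sum \mathbb{P}(\xi_n\ge p)$ contributes almost $1$), with the missing mass $1-\Lambda(y)$ decaying geometrically. Analytically, one can either read off $1 - \Lambda(y) = O(2^{y})$ directly from the Hankel-contour representation used in the upper tail, or else bound the telescoping sum $S(n)-S_q(n) = \sum_{j<q}\bigl(S_j(n)-S_{j+1}(n)\bigr)$ term by term using the same singular expansion. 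With $y = q - \log_4 n \le -x+1$ this yields $O(2^{-x})$.

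The main obstacle is establishing the uniform expansion $S_p(n)/S(n) = \Lambda(p-\log_4 n)+\varepsilon_n(p)$ with an error $\varepsilon_n(p)$ small enough to be dominated by $2^{-x}$ throughout the permitted window for $x$. The constraint $x\le(\tfrac12-\beta)\log_4 n$ translates into $2^{-x}\ge n^{-1/4+\beta/2}$, so one has to show that the lower-order corrections produced by singularity analysis of $S_p(z)$ are of order $o(n^{-1/4+\beta/2})$ uniformly in $p$ across the entire range of interest. This requires tracking how the dominant singularity of $S_p(z)$ and its leading singular coefficient depend on $p$, which is the core technical step the proof must carry out; once it is in place, the concentration bound follows from the two tail estimates above.
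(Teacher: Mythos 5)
Your plan is essentially the paper's proof: the paper likewise reuses the uniform singular expansions of $S_p(z)$ obtained for the expected-order analysis (Lemmas~\ref{L:expanS} and \ref{L:phasep}) together with the Transfer Theorem, getting $\mathbb{P}(\xi_n\ge p)=1+O\bigl(2^p/\sqrt{n}\bigr)+O\bigl(p/2^p\bigr)$ for $p\le\log_4 n$ (the lower tail, where the restriction $x\le(\tfrac12-\beta)\log_4 n$ is exactly what makes the correction terms $O(2^{-x})$) and $\mathbb{P}(\xi_n\ge p)=O\bigl(\exp(-\beta'2^p/\sqrt{n})\bigr)$ for $p>\log_4 n$ (the upper tail). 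The differences are cosmetic: the paper's upper-tail estimate is doubly exponential in $p-\log_4 n$, stronger than the geometric profile $\Lambda(y)=O(2^{-y})$ you posit (your heuristic that the mean ``forces'' this rate is not a derivation, but the weaker bound suffices), and the dominant singularity $z_0$ of $S_p(z)$ does not move with $p$ --- only the singular expansion near $z_0$ does --- which is precisely the uniformity your plan defers to the Theorem~\ref{T:exporder} machinery.
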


\section{Road Map of the Proof}
In this section, we shall address the mayor steps and difficulties
of analyzing the expected order of saturated structures by tools
from analytic combinatorics \cite{Drmota:06,Flajoletbook}. We start
by deriving the key recursions for saturated structures of order
$p$.

Let $S(z)$ (resp. $\mathcal{S}$) be the generating function (resp.
the family) of saturated RNA structures and $R(z)$ (resp.
$\mathcal{R}$) be the generating function (resp. the family) of
saturated structures having the first and the last position paired,
i.e., $\mathcal{R}=(\mathcal{S})$ where the parenthesis represents
the paired bases and $R(z)=z^2 S(z)$. Furthermore, let $S_p(z)$ and
$R_p(z)$ represent the corresponding generating function assuming
order $\ge p$, $p\ge 1$. By decomposing the saturated structure into
independent $\mathcal{R}$-type structures, we obtain the functional
equation for $S(z)$
\begin{equation}\label{E:Sat}
S(z)=\sum_{i=0}^{\infty}\left(1+(i+1)(z+z^2)\right)R(z)^{i}-1
=\frac{z^2 S(z)}{1-z^2 S(z)}+\frac{z+z^2}{(1-z^2 S(z))^2}.
\end{equation}
Now, taking the order into account (omitting variable $z$ for the ease of notation),
we find the following recurrences for $S_p$ and $R_{p+1}$, $p\ge 1$,
\begin{eqnarray}
S_p&=&\sum_{i\ge 1}(1+(i+1)(z+z^2))(R^{i}-(R-R_p)^{i})\nonumber\\
\label{E:recSR}
&=&\frac{R_p[1+2z^2+2z-2R-2Rz-2Rz^2+R^2+(1+z+z^2-R)R_p]}{(R-1)^2(R-R_p-1)^2},
\end{eqnarray}
\begin{align}
R_{p+1}&=&R-z^2\left[\sum_{i=1}^{\infty}(1+(i+1)(z+z^2))
\left((R-R_p)^i+(R_p-R_{p+1})i\right.\right.\nonumber\\
&&\left.\left.\times(R-R_{p})^{i-1}\right)+z+z^2\right]\\
\label{E:recR} &=&\frac{(-R-z^2)R_{p}^3+(-3R+3R^2+3Rz^2-z^2)R_{p}^2}
{-R_{p}^3+(3R-3)R_{p}^2+(6R-3-3R^2+z^2)R_{p}+(R-1)P_R},
\end{align}
where $P=R^3+(z^2-2)R^2+(1-z^2)R-z^3-z^4$ and
$P_R=\partial{P}/\partial{R}=3R^2+2(z^2-2)R+(1-z^2)$ and the initial
conditions are $R_1=R$ and $S_0=S$.\\[2mm]
Unlike for secondary structures\footnote{For secondary structures
the expected order has been analyzed by making use of well-known
closed form representations of multivariate generating function for
binary trees having Horton-Strahler number $p$. By the use of
appropriate symbolic substitutions for the different variables the
binary trees with Horton-Strahler number $p$ were expanded into the
secondary structures of order $p$ and a closed form for the
corresponding generating function followed \cite{MN:01}.}, due to
the non-local dependencies imposed for saturation neither the
appropriate symbolic substitution nor the closed form solution of
recurrence (\ref{E:recSR}) could possibly exist, for which we have
to decode the information of expected order from the recurrence
itself other than attempting to solve it. Therefore, the proof for
the expected order of saturated structures consists of locating the
dominant singularities of $S_p(z)$ for $p\ge 0$, verifying the
analytic continuation of $S_p(z)$ for some $\Delta$-domain, which
guarantees the validness of integration along Hankel contour, see
Figure~\ref{F:newhankel}, and finding the singular expansion of
$S_p(z)$ within the intersection of $\Delta$-domain and a small
neighborhood of the dominant singularity. Finally we apply a
transfer theorem on the singular expansions of $S_p(z)$ and $S(z)$
to extract the $n$-th coefficient of $\sum_{p\ge 1}S_p(z)$ and
$S(z)$, and conclude the expected order $\mathbb{E}\xi_n$ via
$$\mathbb{E}\xi_n=\frac{[z^n]\sum_{p\ge 1}S_p(z)}{[z^n]S(z)}.$$
The results on the deviation to the expected order
follows similarly.\\
Before we proceed, we present the Transfer Theorem by Flajolet and
Odlyzko \cite{Flajoletbook}. The central point of this theorem is to
use of Cauchy's formula by integrating along the Hankel contour
depicted in Figure~\ref{F:newhankel}, which is guaranteed by the
analytic continuation within a $\Delta$-domain. We set
$$\Delta_{z_0}(M,\phi)=\{z\,|\,\,|z|<M, z\ne z_0,|\arg(z-z_0)|>\phi\}$$
where $M>z_0$ and $0<\phi<\frac{\pi}{2}$. Let $U_{z_0}(r,\phi)$ be
the intersection of $\Delta_{z_0}(M,\phi)$ and the neighborhood of
$z_0$, i.e.,
$$U_{z_0}(r,\phi)=\{z\,|\,\,0<|z-z_0|<r, |\arg(z-z_0)|>\phi\},$$
then we have:
\begin{theorem}{\bf (Transfer Theorem)\cite{Flajoletbook}}\label{T:transfer}
Assume that $f(z)$ is analytic within $\Delta_{1}(M,\phi)$, and for
$z\in U_{1}(r,\phi)$, $f(z)$ satisfies
$$f(z)=O\left(\sqrt{1-z}\cdot\log_2\left(\frac{1}{1-z}\right)\right).$$
Then we have $[z^n]f(z)=O(n^{-\frac{3}{2}}\cdot\log_2 n)$.
\end{theorem}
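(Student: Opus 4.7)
The plan is the classical singularity-analysis argument of Flajolet and Odlyzko: combine Cauchy's coefficient formula with a Hankel-type integration contour. Starting from
\[
[z^n] f(z) = \frac{1}{2\pi i}\oint_{\gamma} \frac{f(z)}{z^{n+1}}\, dz,
\]
I would deform the natural small circular contour out to a contour $\gamma$ that lies entirely inside $\Delta_1(M,\phi)$; this deformation is justified by the analyticity of $f/z^{n+1}$ on the $\Delta$-domain. A convenient $\gamma$ consists of four pieces: an inner circular arc $\gamma_\circ$ of radius $1/n$ around $z=1$ (parameterized by angles in $[\phi',2\pi-\phi']$ for some $\phi < \phi' < \pi/2$, so that it stays inside $U_1(r,\phi)$ for $n$ large); two straight segments $\gamma_\pm$ emanating from the endpoints of $\gamma_\circ$ along the rays $\arg(z-1) = \pm\phi'$ outwards; and an outer arc $\gamma_\infty$ on $|z| = r_0$ for some fixed $r_0 \in (1,M)$ closing the contour.

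The second step is to estimate the four contributions separately. On $\gamma_\infty$, the factor $|z|^{-n-1} = r_0^{-n-1}$ decays geometrically while $|f(z)|$ is bounded on this compact set purely by analyticity, giving $O(r_0^{-n})$, which is negligible compared to any polynomial decay. On the inner arc $\gamma_\circ$, the parameterization $z = 1 + e^{i\theta}/n$ yields $|1-z| = 1/n$, so by the hypothesis $|f(z)| = O(n^{-1/2} \log_2 n)$. Since $|z|^{-n-1}$ is uniformly $O(1)$ on $\gamma_\circ$ (as $|z|$ ranges between $1 \pm 1/n$) and the arc length is $O(1/n)$, this piece alone contributes $O(n^{-3/2} \log_2 n)$, already matching the claimed order of magnitude.

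The technical heart, and the main obstacle, is bounding the straight segments $\gamma_\pm$. Parameterizing $z = 1 + (t/n)e^{i\phi'}$ with $t \geq 1$ gives $|z|^2 = 1 + 2(t/n)\cos\phi' + (t/n)^2$, and extracting the dominant behaviour yields a geometric-decay estimate $|z|^{-n-1} \leq e^{-ct}$ with some constant $c = c(\phi') > 0$ valid uniformly in $t$. On the portion of the segment with $|z-1| \leq r$ (inside $U_1$) the hypothesis supplies $|f(z)| = O(\sqrt{t/n}\, \log_2(n/t))$; on the remaining portion $f$ is only bounded by a constant (analyticity on a compact subset of $\Delta_1$), but the exponential factor $e^{-ct}$ still dominates. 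Using $\log_2(n/t) \leq \log_2 n + |\log_2 t|$ for $t \geq 1$, the contribution of each segment is bounded by
\[
\frac{C \log_2 n}{n^{3/2}} \int_{1}^{\infty} t^{1/2}\bigl(1 + |\log_2 t|\bigr)\, e^{-ct}\, dt = O(n^{-3/2} \log_2 n),
\]
where the integral converges to a finite constant. Summing the four pieces then yields the theorem. The delicate point throughout is routing $\gamma$ so that $\gamma_\circ$ and the inner portion of $\gamma_\pm$ stay in $U_1(r,\phi)$ (handled by the choice $\phi' > \phi$) and carefully separating the two regimes, inside and outside $U_1$, along the segments.
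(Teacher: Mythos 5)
Your proof is correct and is exactly the approach the paper relies on: the paper does not prove this theorem itself (it cites Flajolet--Sedgewick and merely sketches the idea via the Hankel contour of Figure~\ref{F:newhankel}, with the inner arc and the two rectilinear segments carrying the main contribution), and your four-piece contour with the bounds $O(n^{-3/2}\log_2 n)$ from the inner arc and segments plus exponentially small outer-arc and tail contributions is the standard Flajolet--Odlyzko argument behind that citation. No gaps worth noting.
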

Theorem~\ref{T:transfer} assumes the dominant singularity is $z=1$.
However, the case of a dominant singularity at $z=z_0\ne 1$, can
always be boiled down to the case where $z=1$ is the dominant
singularity according to
$$[z^n]f(z)=z_0^n\cdot[z^n]f\left(\frac{z}{z_0}\right).$$
\begin{figure}[ht]
\centerline{%
\epsfig{file=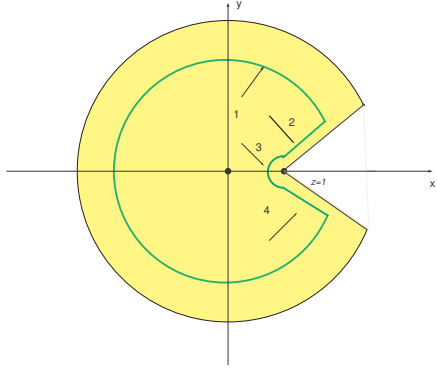,width=0.45\textwidth}\hskip15pt }
\caption{\small $\Delta_1$-domain (yellow) and Hankel contour
(green): Transfer theorem applies Cauchy's formula by integrating
along the Hankel contour, colored in green. The inner incomplete
circle $3$, together with two rectilinear lines $2$ and $4$ mainly
contribute to the integral. Here we assume the dominant singularity
is at $z=1$.} \label{F:newhankel}
\end{figure}
In what follows we detail the steps that are needed for the singularity
analysis of $\sum_{p\ge 1}S_p(z)$.\\[2mm]
{\bf Step $1$: Locate dominant singularities:} We first observe that
the dominant singularity of $S(z)$ is unique since $[z^n]S(z)\ne 0$
holds for arbitrary $n$ and therefore $S(z)$ is aperiodic
\cite{Flajoletbook}. Assume $z_0$ is the unique dominant singularity
of $S(z)$, then $z_0$ is also the unique dominant singularity of
$S_p(z)$ for $p\ge 0$. Indeed, consider the field extension of
the rational function field $Q(z)$ induced by algebraic functions
$S_{p}(z)$, we can inductively prove that $[Q(S_p(z)):Q(z)]=3$ based
on its tower relation
$[Q(S_p(z)):Q(z)]=[Q(S_p(z)):Q(S_{p-1}(z))][Q(S_{p-1}(z)):Q(z)]
=[Q(S_{p-1}(z)):Q(z)]$. In other words, $S_p(z)$ is an algebraic
function of degree $3$ over the field $Q(z)$. Let $S_{\le p}(z)$ be
the generating function of saturated structures having order $\le
p$, similarly we can prove $S_{\le p}(z)$ is rational and in view of
$S_{p}(z)=S(z)-S_{\le p-1}(z)$, we can claim that $S_p(z)$ ($p\ge
0$) have the same unique dominant singularity as $S(z)$. Otherwise,
suppose $z=\gamma<z_0$ is the dominant singularity of $S_p(z)$ and
therefore $S_p(\gamma)<S_p(z_0)<S(z_0)<\infty$, which contradicts to
the fact that $S_p(\gamma)=S(\gamma)-S_{\le p-1}(\gamma)=\infty$
since $S_{\le p-1}(z)$ is a rational function and $z=\gamma$ must be
one of the poles of $S_{\le p-1}(z)$. Furthermore, $z=z_0$ is the
unique dominant singularity of $S_p(z)$ since $[z^n]S_p(z)\ne 0$ and
$S_p(z)$ is aperiodic.
\begin{lemma}\label{L:domiSp}
Let $z_0$ be the unique dominant singularity of $S_p(z)$ $(p\ge 0)$,
then $z_0\approx 0.424687$.
\end{lemma}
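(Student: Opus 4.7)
The plan is to reduce the lemma to locating the dominant singularity of $S(z)$. The field-extension argument carried out just before the statement already shows that for every $p\ge 0$ the function $S_p(z)$ is algebraic of degree $3$ over $\mathbb{Q}(z)$ and shares its dominant singularity with $S(z)$, so it suffices to pin down the singularity of $S$ itself.

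For this I would start from Clote's cubic
$$-z^4 S(z)^3 - z^2(-2+z^2) S(z)^2 + (z^2-1)S(z) + z(1+z) = 0$$
recalled in the introduction, which exhibits $S(z)$ as a root of an irreducible polynomial $P(z,S)\in\mathbb{Q}(z)[S]$ of degree $3$ in $S$. By the standard theory of algebraic generating functions the only candidate singularities of $S(z)$ are the zeros of the leading coefficient in $S$ (here $z=0$, which does not compete) together with the zeros of the discriminant $\Delta(z)=\mathrm{Disc}_S P(z,S)$; equivalently, these are the values of $z$ at which the system $P(z,S)=P_S(z,S)=0$ admits a common solution, i.e.\ the projections on the base of the ramification points of the associated Riemann surface. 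Since $S(z)$ has non-negative Taylor coefficients and is not a polynomial, Pringsheim's theorem forces its dominant singularity to be real and positive, so I only have to identify the smallest positive real root of $\Delta$ that is realised by the combinatorial branch.

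Concretely, I would compute $\Delta(z)$ via the explicit cubic discriminant formula (or, equivalently, $\mathrm{Res}_S(P,P_S)$), factor out the trivial contribution coming from $z=0$, and then locate numerically the smallest positive real root of what remains; this will yield $z_0\approx 0.424687$. To be sure this root corresponds to $S(z)$ rather than to an algebraic conjugate, I would follow the power series $S(z)=z+z^2+\cdots$ by real-analytic continuation from $0$ along the positive axis and verify that the three branches of $P(z,\cdot)$ remain real and distinct on $[0,z_0)$, with two of them colliding precisely at $z=z_0$. This collision is exactly the square-root-type branch point that the later singular expansion of $S_p(z)$ will rely on.

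The expected obstacle is bookkeeping rather than conceptual: the explicit discriminant polynomial has moderately many terms and one must certify that no smaller positive real zero of $\Delta$ is attained by the combinatorial branch of $P$. A computer-algebra calculation of $\mathrm{Res}_S(P,P_S)$, combined with a tight numerical tracking of $S(z)$ along $(0,z_0)$, is the cleanest way to secure both the identification of $z_0$ and the fact that it is a dominant singularity for the correct sheet.
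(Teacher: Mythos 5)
Your proposal is correct and follows essentially the same route as the paper: the paper simply applies the implicit function theorem to the functional equation for $S(z)$ (equivalently, to the cubic $P(z,\cdot)=0$), and the failure condition of that theorem is exactly your system $P=P_S=0$, i.e.\ the vanishing of the discriminant, with the transfer to $S_p(z)$ taken from the field-extension argument preceding the lemma just as you do. Your added care about Pringsheim's theorem and certifying that the smallest positive discriminant root lies on the combinatorial branch is a more explicit write-up of the same computation, not a different method.
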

We apply the implicit function theorem on eq.~(\ref{E:Sat}) to extract
the unique dominant singularity of $S(z)$, which is also the unique dominant
singularity of $S_p(z)$.\\[2mm]
{\bf Step $2$: Establish the analytic continuation in some
$\Delta_{z_0}$-domain:} Since $S_p(z)$ is an algebraic function of
degree $3$ over the rational function field $Q(z)$, $S_p(z)$ must be D-finite, which
allows for analytic continuation in any $\Delta_{z_0}$-domain
containing zero \cite{Stanley:80}.\\[2mm]
{\bf Step $3$: Singular expansion:} We shall show the singular
expansion of $S_p(z)$ within $U_{z_0}(\epsilon,\phi)$ for
sufficiently small $\epsilon>0$ and $0<\phi<\frac{\pi}{2}$. Our
strategy is to transform the fractional form of the recursion for $R_p(z)$
(eq.~(\ref{E:recR})) into ``linear'' form, based on the
contributions of individual terms to the behavior of $R_{p}(z)$ for
different $p$.\\[2mm]
{\bf Case $1$}: $p\le p_M=\max\left\{p:\left|P_R(R-1)\right|
\le\left|\frac{a_2}{4}\cdot R_{p}\right|\right\}$ for
$a_2=-3R+3R^2+3Rz^2-z^2$.


\begin{lemma}\label{L:expanS}
Assume that $z\in U_{z_0}(\epsilon,\phi)$ and
$a_2=-3R+3R^2+3Rz^2-z^2$, then
\begin{eqnarray*}
S_{p+1}(z)&=&s_p(z_0)\,2^{-p}-\frac{1}{z_0^2}\cdot\frac{P_R(R-1)}{2a_2}\\
&&+O\left(\frac{p}{2^p}\left|\frac{P_R(R-1)}{a_2}\right|\right)+O\left(2^{p}
\left|\frac{P_R(R-1)}{a_2}\right|^2\right).
\end{eqnarray*}
holds for $p\le p_M$ and $s_p(z_0)=s+O(2^{-p})$ where $s>0$ is
constant.
\end{lemma}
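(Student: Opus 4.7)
The plan is to linearize the rational recurrence~(\ref{E:recR}) for $R_{p+1}$ around $R_p = 0$, solve the resulting affine difference equation, and translate the answer to $S_{p+1} = R_{p+1}/z^2$ while tracking two principal error channels. The central algebraic fact driving the $2^{-p}$ rate is that the linear coefficient of the linearized map equals exactly $1/2$ at $z = z_0$.

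I would first introduce the abbreviations $A := 6R - 3 - 3R^2 + z^2$ and $B := (R-1)P_R$, so that the denominator of~(\ref{E:recR}) reads $B + A R_p + O(R_p^2)$ and the numerator reads $a_2 R_p^2 + O(R_p^3)$. The hypothesis $p \le p_M$ forces $|B/(A R_p)|$ to stay uniformly bounded by a constant strictly less than one, so the fraction admits a convergent geometric expansion, yielding
\begin{equation*}
R_{p+1} \;=\; \alpha R_p \;-\; \alpha \beta \;+\; O(R_p^2) \;+\; O\!\bigl(B^2/R_p\bigr), \qquad \alpha := a_2/A,\ \ \beta := B/A.
\end{equation*}

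The decisive step is to verify $\alpha(z_0) = 1/2$. Because $z_0$ is a square-root branch point of the algebraic curve $P(R,z) = 0$, one has $P_R(R(z_0)) = 0$, equivalently $3 R(z_0)^2 = (4 - 2 z_0^2) R(z_0) + z_0^2 - 1$. Substituting this identity into both $A(z_0)$ and $a_2(z_0)$ yields $A(z_0) = 2\,a_2(z_0)$, so $\alpha(z_0) = 1/2$. Next I would solve the affine recurrence $u_{p+1} = \alpha u_p - \alpha \beta$ with initial datum $u_1 = R$: its unique fixed point is $u^\ast = -\alpha\beta/(1-\alpha)$, which collapses to $-\beta = -(R-1)P_R/(2 a_2)$ at $z_0$, and the closed form is $u_p = u^\ast + (R - u^\ast)\alpha^{p-1}$. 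Dividing by $z^2$ produces the two leading terms of the claimed expansion; the refinement $s_p(z_0) = s + O(2^{-p})$ follows because at $z_0$ the fixed point vanishes and the nonlinear perturbation injects only $O(2^{-2p})$ per iterate, so the leading coefficient of $2^{-p}$ converges geometrically to a positive constant $s$ (essentially $S(z_0)$ up to normalization).

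Finally I would bound the two remainders. Writing $e_p := R_p - u_p$, the deviation obeys $e_{p+1} = \alpha e_p + O(R_p^2) + O(B^2/R_p)$, and iterating with $R_p \asymp 2^{-p}$ followed by geometric summation produces the $O(2^p |B/a_2|^2)$ tail from the $B^2/R_p$ perturbation. The $O(p\,2^{-p}\,|B/a_2|)$ contribution arises from the $z$-dependence of $\alpha$: near the branch point $P_R(R(z)) = O(|z - z_0|^{1/2})$, so $\alpha(z) - 1/2 = O(|B/a_2|)$, and raising $\alpha(z)$ to the $(p-1)$st power inserts a multiplicative factor $1 + O(p|B/a_2|)$ that is absorbed into the stated error. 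The main obstacle I anticipate is the simultaneous uniform control of both error channels across $U_{z_0}(\epsilon,\phi)$: the quantities $R_p$, $\beta$, and $\alpha - 1/2$ all trace back to the same square-root behavior of $P_R$ at $z_0$ and must be bounded in concert rather than independently, which is what the threshold $p \le p_M$ is engineered to enforce.
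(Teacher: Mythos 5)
Your plan follows the same route the paper sketches for Case $1$: rewrite the fractional recursion \eqref{E:recR} in the regime where the linear term of the denominator dominates the constant term $(R-1)P_R$, observe that the resulting multiplier is $\tfrac12$ at the branch point, and read off the fixed-point shift $-\frac{P_R(R-1)}{2a_2}$ plus two error channels. Your key computation is correct: with $A=6R-3-3R^2+z^2$ one has the exact identity $A=2a_2-3P_R$ (this is exactly how the paper rewrites the denominator as $\frac{P_R(R-1)}{a_2}+2R_p+B_p'$), so $\alpha-\tfrac12=\frac{3P_R}{2A}=O\bigl(\bigl|\frac{P_R(R-1)}{a_2}\bigr|\bigr)$, and your accounting of the two remainders ($\sum_j\alpha^{p-j}B^2/R_j=O(2^p|B|^2)$ and the factor $(1+O(|\alpha-\tfrac12|))^{p}=1+O(p|B/a_2|)$, legitimate since $p\le p_M$ forces $p|P_R|\to0$) reproduces the two stated error terms, as does your mechanism for $s_p(z_0)=s+O(2^{-p})$.

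There is, however, one genuine gap: the translation step ``$S_{p+1}=R_{p+1}/z^2$'' is false. The identity $R(z)=z^2S(z)$ holds only for the full classes; enclosing a structure in a base pair can raise its order (the $\phi$-image $()()$ has order $1$ while $(()())$ has order $2$), so $R_p\ne z^2S_p$ for $p\ge1$ — this is precisely why the paper carries the separate relation \eqref{E:recSR} between $S_p$ and $R_p$. Linearizing \eqref{E:recSR} in $R_p$ gives $S_p=c(z)\,R_p+O(R_p^2)$ with $c(z)=\frac{(1-R)+2(z+z^2)}{(1-R)^3}$, and one must still check that this coefficient, not $1/z^2$, produces the constants in the lemma; using $P(R,z)=0$ one finds $c(z)=\frac{1}{z^2}-\frac{P_R}{z^2(1-R)^2}$, so $c(z)=\frac{1}{z_0^2}+O\bigl(|P_R|\bigr)$ exactly because $P_R$ vanishes at the branch point. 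Without this (short but non-obvious) verification, the prefactors of $2^{-p}$ and of $\frac{P_R(R-1)}{2a_2}$ in your expansion are unjustified — your assumed relation happens to give the right values only by virtue of the hidden identity. Two smaller points in the same vein: the bound $|B/(AR_p)|\le\tfrac18$ for \emph{all} $p\le p_M$ (the definition of $p_M$ only controls $p=p_M$) and the estimate $R_p\asymp2^{-p}$ must be established by a joint induction on $p$, uniformly in $z\in U_{z_0}(\epsilon,\phi)$, rather than assumed; and the positivity of $s$ requires a lower bound on $2^pR_p(z_0)$ (a convergent product argument), which you assert but do not supply.
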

\noindent{\bf Case $2$}: $p>p_M$. We continue analyzing the recurrence relations
for $S_{p}(z)$ and $R_p(z)$ for $p>p_M$. Let
$A_p'=-\frac{R+z^2}{a_2}\cdot R_{p}^3$ and
$B_p'=-\frac{1}{a_2}R_{p}^3+\frac{3R-3}{a_2}R_{p}^2
-\frac{3P_RR_{p}}{a_2}$. Note that $A_p'\rightarrow 0$ and
$B_p'\rightarrow 0$ as $p\rightarrow \infty$ and $z\in
U_{z_0}(\epsilon,\phi)$. Then we simply have
\begin{eqnarray}\label{E:simprecR}
R_{p+1}(z)=\frac{R_{p}^2+A_p'}{\frac{P_R(R-1)}{a_2}+2R_{p}+B_p'}.
\end{eqnarray}
We observe that $B_p'$ and $A_p'$ converge to $0$ faster than
$R_{p}$ as $p\rightarrow\infty$, and it only remains to determine
the major contribution between $R_p$ and $\frac{P_R(R-1)}{a_2}$ from
the denominator to the behavior of $R_p$ for different $p$. Here we
all reduce the recursions to the function
$h(x,\mu,\nu)=\frac{x^2+\mu}{1+2x+\nu},$ from which we can prove
$h(x,\mu,\nu)=h(x,0,0)+O(\max\{|\mu|,|\nu|\})$ holds uniformly for
$x\ne \frac{1}{2}$ as $\max\{|\mu|,|\nu|\}\rightarrow 0$. In order
to asymptotically solve eq.~(\ref{E:simprecR}), we need to avoid
$R_p=\frac{1}{2} \frac{P_R(R-1)}{a_2}$, which may occur when $p$ is
sufficiently large. To this aim, we select $\lambda_1>0$ and
$\lambda_2>0$ such that for $p\le p_M+\lambda_2$,
$\left|\frac{P_R(R-1)}{a_2}\right|\le |R_{p}|$ and for $p\ge
p_M-\lambda_1$, $|R_{p}|\ge 8\left|\frac{P_R(R-1)}{a_2}\right|$.
Lemma~\ref{L:phasep} below shows the ``continuity'' of the phase
transition around $p=p_M$.
\begin{lemma}\label{L:phasep}
Assume $z\in U_{z_0}(\epsilon,\phi)$ and $p_0=p_M-\lambda_1$, then
for arbitrary but fixed $\delta\le \lambda_2$, we have uniformly for
$z$ and for $0\le k\le\lambda_1+\delta$ that,
\begin{eqnarray*}
S_{p_0+k}&=&\frac{1}{z_0^2}\frac{\frac{P_R(R-1)}{a_2}}
{\left(\frac{\frac{P_R(R-1)}{a_2}}{R_{p_0}}+1\right)^{2^k}-1}
+O\left(\left|\frac{P_R(R-1)}{a_2}\right|^2\right),
\end{eqnarray*}
where $a_2=-3R+3R^2+3Rz^2-z^2$.
\end{lemma}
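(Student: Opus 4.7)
The plan is to reduce the fractional recursion (\ref{E:simprecR}) for $R_p$ to the normal form $h(x,0,0)=x^2/(1+2x)$, iterate that model in closed form, and then translate the result back to $S_p$ via (\ref{E:recSR}). First, set $c:=P_R(R-1)/a_2$ and $x_p:=R_p/c$. Dividing the numerator and denominator of (\ref{E:simprecR}) by $c$ rewrites the recurrence as
$$
x_{p+1}=\frac{x_p^2+\mu_p}{1+2x_p+\nu_p},\qquad\mu_p:=A_p'/c^2,\ \ \nu_p:=B_p'/c.
$$
Substituting the explicit forms of $A_p'$ and $B_p'$ together with the window assumption $|R_p|\asymp|c|$ that is built into the quantitative choices of $\lambda_1$ and $\lambda_2$ yields $\mu_p,\nu_p=O(|c|)$ uniformly in $z\in U_{z_0}(\epsilon,\phi)$.

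Next, the unperturbed recursion $y_{k+1}=y_k^2/(1+2y_k)$ admits a closed form via the substitution $u_k:=1/y_k+1$, which transforms it into $u_{k+1}=u_k^2$, hence $u_k=u_0^{2^k}$ and $y_k=1/(u_0^{2^k}-1)$. Iterating the stated local estimate $h(x,\mu,\nu)=h(x,0,0)+O(\max\{|\mu|,|\nu|\})$ (valid uniformly for $x$ bounded away from $-\tfrac12$) for $k\le\lambda_1+\delta$ steps, while keeping $x_p$ in a fixed compact set disjoint from $-\tfrac12$ thanks to the window bounds, produces an accumulated error of $O(|c|)$ in $x_{p_0+k}$; here $k$ is bounded and $h$ is Lipschitz in $x$ on that compact set, so the constant is under control. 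Undoing the rescaling yields
$$
R_{p_0+k}=\frac{c}{(c/R_{p_0}+1)^{2^k}-1}+O(|c|^2),
$$
uniformly in $z\in U_{z_0}(\epsilon,\phi)$.

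The final step converts $R_{p_0+k}$ to $S_{p_0+k}$. Taylor expanding the right-hand side of (\ref{E:recSR}) around $R_p=0$ gives $S_p=\alpha(z)\,R_p+O(R_p^2)$ with
$$
\alpha(z)=\frac{1+2z+2z^2-2R(1+z+z^2)+R^2}{(R-1)^4}.
$$
Reducing $R^2$ via $P_R(R(z_0),z_0)=0$ and $R^3$ via $P(R(z_0),z_0)=0$ simplifies $\alpha(z_0)$ to $1/z_0^2$, as a short polynomial identity in the quotient ring $\mathbb{C}[R,z]/(P,P_R)$. Substituting the closed-form expression for $R_{p_0+k}$ and absorbing $O(R_{p_0+k}^2)=O(|c|^2)$ produces the announced expression for $S_{p_0+k}$.

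The main obstacle I expect is the uniform control of the iterated perturbation. One has to verify that the rescaled iterate $x_p$ stays in a fixed compact set bounded away from the pole of $h$ for every $p_0\le p\le p_0+\lambda_1+\delta$ and uniformly in $z\in U_{z_0}(\epsilon,\phi)$, which is exactly what the quantitative choices of $\lambda_1$ and $\lambda_2$ must secure, and then to propagate the Lipschitz constant through $k$ iterations so that the remainder is truly $O(|c|^2)$ with a $z$-independent constant. The algebraic identification $\alpha(z_0)=1/z_0^2$, while short, also has to be carried out explicitly to anchor the $1/z_0^2$ prefactor appearing in the lemma.
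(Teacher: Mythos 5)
Your proposal follows essentially the same route the paper lays out: rescale by $c=P_R(R-1)/a_2$ to reduce (\ref{E:simprecR}) to the model map $h(x,\mu,\nu)$ with $\mu_p,\nu_p=O(|c|)$, apply the perturbation estimate over the boundedly many steps of the window secured by $\lambda_1,\lambda_2$ (keeping the iterates away from the pole of $h$), solve the unperturbed recursion in closed form via $u=1/y+1\mapsto u^2$, and convert back to $S_{p_0+k}$ through the linearization of (\ref{E:recSR}), whose coefficient indeed evaluates to $1/z_0^2$ at the branch point. The only detail worth adding is that replacing $\alpha(z)$ by $\alpha(z_0)$ costs an extra $O\bigl(\sqrt{1-z/z_0}\bigr)\cdot|R_{p_0+k}|=O\left(\left|\frac{P_R(R-1)}{a_2}\right|^2\right)$, which is absorbed by the stated error term.
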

\begin{lemma}\label{L:largep}
Assume that $z\in U_{z_0}(\epsilon,\phi)$, there exists
$\kappa_0\ge\lambda_2$ such that for $p>p_M+\kappa_0$,
$$S_{p+1}(z)=O\left(\left|\frac{P_R(R-1)}{a_2}\right|
\exp(-\ln2\cdot 2^p)\right).$$
\end{lemma}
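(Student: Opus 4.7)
The plan is to push the simplified recursion (\ref{E:simprecR}) into the deep regime where $R_p$ is substantially smaller than $c := P_R(R-1)/a_2$, so that the recursion collapses to an essentially pure squaring map and drives $R_p$ doubly exponentially to zero. The starting point is furnished by Lemma~\ref{L:phasep}, which controls the behaviour at the phase transition $p = p_M$ and already shows that $|R_{p_M + \lambda_2}/c|$ is strictly smaller than $1$ (in fact, bounded by some constant $\alpha < 1$ uniformly on $U_{z_0}(\epsilon,\phi)$ thanks to the doubly-exponential factor $((c/R_{p_M})+1)^{2^k}$ appearing in the denominator there).

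From there, I would iterate (\ref{E:simprecR}) directly. Set $\tilde R_p := R_p/c$. Since $A_p'/R_p^2 = O(R_p)$ and $B_p'/c = O(R_p)$ uniformly on $U_{z_0}(\epsilon,\phi)$, dividing (\ref{E:simprecR}) through by $c$ yields
\begin{align*}
\tilde R_{p+1} \;=\; \tilde R_p^{\,2}\bigl(1 + O(|\tilde R_p|)\bigr).
\end{align*}
For $|\tilde R_p|$ sufficiently small (say $\le 1/4$), a straightforward induction gives $|\tilde R_{p+1}| \le 2|\tilde R_p|^2$, whence $|\tilde R_{p_M+\kappa_0+k}| \le \tfrac{1}{2}\bigl(2|\tilde R_{p_M+\kappa_0}|\bigr)^{2^k}$ for all $k \ge 0$, provided $\kappa_0 \ge \lambda_2$ is chosen large enough (and it can be, because only a bounded number of squaring steps is needed to drop $|\tilde R|$ below $1/4$ starting from its value at $p = p_M+\lambda_2$ given by Lemma~\ref{L:phasep}). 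This is the announced doubly exponential decay, i.e.\ $|R_p| = O(|c|\exp(-\ln 2\cdot 2^p))$ for $p > p_M+\kappa_0$.

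Finally, to pass from $R_{p+1}$ to $S_{p+1}$, I would substitute into (\ref{E:recSR}) and expand around $R_{p+1}=0$. The right-hand side of (\ref{E:recSR}) is a rational function of $R_{p+1}$ whose numerator vanishes to order one at $R_{p+1}=0$ and whose denominator $(R-1)^2(R-R_{p+1}-1)^2$ is bounded away from zero on $U_{z_0}(\epsilon,\phi)$, so
\begin{align*}
S_{p+1}(z) \;=\; R_{p+1}(z)\cdot H(z) \,+\, O\bigl(R_{p+1}(z)^2\bigr)
\end{align*}
for an analytic, bounded factor $H(z)$. This yields $S_{p+1}(z) = O(|c|\exp(-\ln 2\cdot 2^p))$, as claimed.

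The main obstacle is securing uniformity in $z \in U_{z_0}(\epsilon,\phi)$ of the implicit constants throughout the squaring iteration: one has to check that $A_p'/R_p^2$ and $B_p'/c$ stay uniformly bounded as $z \to z_0$ (and in particular that $c$ does not vanish on $U_{z_0}(\epsilon,\phi)$), both of which follow from the explicit rational expressions for $A_p'$, $B_p'$ combined with the analyticity of $P$, $P_R$ and $a_2$ on the $\Delta$-domain. A secondary subtlety is bridging the range $k \le \lambda_1+\lambda_2$ where Lemma~\ref{L:phasep} applies with the squaring-map regime above; this is handled by finitely many direct applications of (\ref{E:simprecR}), after which the squaring estimate takes over and governs the tail.
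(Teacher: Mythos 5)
Your overall strategy --- seeding at the phase transition via Lemma~\ref{L:phasep}, iterating the simplified recursion (\ref{E:simprecR}) in the regime where $|R_p|$ has dropped below $|c|$ (writing $c=P_R(R-1)/a_2$ as you do), so that the recursion degenerates into a squaring map, and then transferring back from $R_{p+1}$ to $S_{p+1}$ through (\ref{E:recSR}) --- is the natural route and the one the paper's road map points to. The problem is the very last identification. What your induction actually yields is $|\tilde R_{p}|\le\tfrac12\bigl(2|\tilde R_{p_M+\kappa_0}|\bigr)^{2^{\,p-p_M-\kappa_0}}$, i.e.\ $|R_{p}|=O\bigl(|c|\exp(-\ln 2\cdot 2^{\,p-p_M-\kappa_0})\bigr)$, and you then announce this as $|R_p|=O\bigl(|c|\exp(-\ln 2\cdot 2^{p})\bigr)$, which is the bound of Lemma~\ref{L:largep}. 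These are not interchangeable: the exponents differ by the factor $2^{p_M+\kappa_0}$, and $p_M\approx\log_2\bigl|a_2/(P_R(R-1))\bigr|$ tends to infinity as $z\to z_0$ inside $U_{z_0}(\epsilon,\phi)$ --- precisely the limit in which the estimate must be uniform, because it is subsequently summed over $p$ and pushed through the Transfer Theorem. Concretely, just above the threshold, say at $p=p_M+\kappa_0+1$, your own iteration (and Lemma~\ref{L:phasep} itself) puts $|R_{p+1}|$ at a fixed constant multiple of $|c|$, whereas the bound you claim to have established would force it to be $O\bigl(|c|\,2^{-2^{\kappa_0+1}/|c|}\bigr)$, which is vastly smaller once $|c|$ is small. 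So the replacement of $2^{\,p-p_M-\kappa_0}$ by $2^{p}$ is not a harmless rescaling and cannot come out of the squaring iteration; either you must supply a genuinely different argument for the stated form, or you should state clearly that what you prove is the weaker estimate $O\bigl(|c|\exp(-\ln 2\cdot 2^{\,p-p_M-\kappa_0})\bigr)$ --- which, to be fair, is exactly what the application in the proof of Theorem~\ref{T:exporder} needs, since it already gives $\sum_{p>p_M+\kappa_0}S_{p+1}(z)=O(|c|)$, but it is not the literal statement you were asked to prove.

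A secondary gap: the seed estimate you extract from Lemma~\ref{L:phasep}, namely $|R_{p_M+\lambda_2}/c|\le\alpha<1$ uniformly on $U_{z_0}(\epsilon,\phi)$, is asserted parenthetically but requires an argument. The quantity $(1+c/R_{p_0})^{2^{k}}-1$ is complex, and to bound it away from zero you must control $\arg(c/R_{p_0})$; this is where the restriction to the $\Delta$-domain enters, keeping $\arg\sqrt{1-z/z_0}$, hence $\arg c$, away from $\pm\pi/2$, since otherwise $|(1+c/R_{p_0})^{2^{k}}|$ could be arbitrarily close to $1$ and the ratio need not be small. The same phase control has to be propagated along your ``bridge'' of finitely many direct applications of (\ref{E:simprecR}), where the complex denominator $1+2\tilde R_p$ could otherwise become small; your stated uniformity concerns (boundedness of $A_p'/R_p^2$, $B_p'/c$ and nonvanishing of $c$) do not cover this point.
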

\noindent{\bf Step $5$: Transfer to coefficients:} It only remains to
translate the singular expansion of the function into an asymptotic
estimate of its coefficients.
\begin{theorem}
The expected order of a saturated secondary structures of size $n$ is
$$\mathbb{E}\xi_n=\log_4n\cdot\left(1+O\left(\frac{\log_2n}{n}\right)\right).$$
\end{theorem}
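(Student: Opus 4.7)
The plan is to finish Step~$5$ by assembling the three regimes of the singular expansion of $S_p(z)$ from Lemmas~\ref{L:expanS}, \ref{L:phasep} and~\ref{L:largep} into a single singular expansion of the numerator $\sum_{p\ge 1}S_p(z)$, and then to apply the Transfer Theorem~\ref{T:transfer} separately to this sum and to $S(z)$. First, I would record the singular expansion of $S(z)$ itself: since $S(z)$ is algebraic of degree $3$ and $z_0$ is a simple branch point of the cubic in~(\ref{E:Sat}), a Newton--Puiseux expansion gives
$$
S(z) = s_0 - s_1 \sqrt{1 - z/z_0} + O(1 - z/z_0)
$$
for constants $s_0, s_1 > 0$, with analytic continuation into a $\Delta_{z_0}$-domain being automatic. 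Theorem~\ref{T:transfer} then produces $[z^n] S(z) = \frac{s_1}{2\sqrt{\pi}}\, z_0^{-n}\, n^{-3/2}(1 + O(1/n))$.

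Next, I would assemble the singular expansion of $\sum_{p\ge 1} S_p(z)$. Near $z_0$ one has $P_R(R-1)/a_2 \sim c\sqrt{1 - z/z_0}$ (since $z_0$ is where the discriminant of the cubic vanishes), while Lemma~\ref{L:expanS} shows $R_p = \Theta(2^{-p})$ for moderate $p$. Hence the threshold $p_M = p_M(z)$ at which $|R_p|$ first drops to the size of $|P_R(R-1)/a_2|$ satisfies
$$
p_M(z) = \tfrac{1}{2}\log_2\!\bigl(1/(1 - z/z_0)\bigr) + O(1).
$$
Summing Lemma~\ref{L:expanS} over $0 \le p \le p_M - \lambda_1$, the $p$-independent piece $-\frac{1}{z_0^2}\cdot\frac{P_R(R-1)}{2a_2}$ accumulates $p_M$ copies of itself; the geometric piece $s_p(z_0)\,2^{-p}$ sums to an analytic function plus an $O(\sqrt{1-z/z_0})$ error; and the two big-$O$ terms give lower order. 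The transition window of Lemma~\ref{L:phasep} has $O(1)$ width and each term is $O(|P_R(R-1)/a_2|)$, while the tail from Lemma~\ref{L:largep} is doubly-exponentially convergent: both contribute at most $O(\sqrt{1-z/z_0})$. Altogether the leading singularity of $\sum_{p\ge 1}S_p(z)$ has the precise shape $K\sqrt{1 - z/z_0}\,\log_2\!\bigl(1/(1 - z/z_0)\bigr)$ for an explicit constant $K > 0$.

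Applying Theorem~\ref{T:transfer} then yields $[z^n]\sum_{p\ge 1} S_p(z) = \frac{K}{2\sqrt{\pi}}\, z_0^{-n}\, n^{-3/2}\log_2 n\,(1 + O(\log_2 n/n))$, and forming the quotient with $[z^n] S(z)$ gives $\mathbb{E}\xi_n = (K/s_1)\log_2 n\,(1+O(\log_2 n/n))$. A direct computation of $K/s_1$ from the explicit forms of $P_R$, $a_2$, $R(z_0)$ and of $s_1$ (the latter obtained via the implicit function theorem applied to~(\ref{E:Sat})) returns the value $1/2$, turning $\log_2 n$ into $\log_4 n$ and delivering the asserted asymptotic.

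The hard part is not the Transfer Theorem itself but controlling the errors in Lemmas~\ref{L:expanS} and~\ref{L:phasep} \emph{uniformly} in both $p$ and $z$, so that summation up to a $z$-dependent cutoff $p_M(z)$ of order $\log(1-z/z_0)^{-1}$ remains valid throughout $U_{z_0}(\epsilon,\phi)$. A further subtlety is that $p_M(z)$ is integer-valued, so one must verify that replacing it by its continuous analogue affects the sum only at the level of $O(\sqrt{1 - z/z_0})$; this uses precisely the ``continuity'' of the phase transition supplied by Lemma~\ref{L:phasep}, which glues the small-$p$, transition and large-$p$ regimes together smoothly.
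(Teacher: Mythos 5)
Your proposal follows essentially the same route as the paper's own proof: summing the three regimes of Lemmas~\ref{L:expanS}, \ref{L:phasep} and~\ref{L:largep} with the cutoff $p_M\approx-\log_2\left|\frac{P_R(R-1)}{a_2}\right|\approx\frac{1}{2}\log_2\frac{1}{1-z/z_0}$, extracting the singular term of type $\sqrt{1-z/z_0}\,\log_2\frac{1}{1-z/z_0}$ for $\sum_{p\ge1}S_p(z)$, and transferring both numerator and denominator before taking the quotient, where the factor $\frac{1}{2}$ (hence $\log_4 n$) arises exactly as you describe from the square-root scale of $P_R(R-1)/a_2$. Your accounting of the constant via $K/s_1=1/2$ matches the paper's explicit cancellation involving $P_z(z_0)$, $P_{RR}(z_0)$ and $\Gamma(-\tfrac{1}{2})$, so the argument is correct and not materially different.
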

\begin{proof}
We first analyze the expectation function $F(z)=\sum_{p\ge 1}S_p(z)$ for $z\in
U_{z_0}(\epsilon,\phi)$. According to Lemma~\ref{L:expanS},
Lemma~\ref{L:phasep} and Lemma~\ref{L:largep}, we have for $p\ge 1$,
\begin{eqnarray*}
\sum_{p\le p_M}S_{p+1}(z)&=&\sum_{p\le
p_M}s_p(z_0)\,2^{-p}-\frac{p_M}{z_0^2}\cdot\frac{P_R(R-1)}{2a_2}
+O\left(\left|\frac{P_R(R-1)}{a_2}\right|\right)\\
&=&\sum_{p\ge
1}s_p(z_0)\,2^{-p}-\frac{p_M}{z_0^2}\cdot\frac{P_R(R-1)}{2a_2}
+O\left(\left|\frac{P_R(R-1)}{a_2}\right|\right).\\
\sum_{p>p_M}S_{p+1}(z)&=&\sum_{p_M<p\le
p_M+\kappa_0}S_{p+1}(z)+\sum_{p>p_M+\kappa_0}S_{p+1}(z)\\
&=&O\left(\left|\frac{P_R(R-1)}{a_2}\right|\right)
+\sum_{p>p_M+\kappa_0}O\left(\left|\frac{P_R(R-1)}{a_2}\right|\exp(-\ln2\cdot
2^p)\right)\\
&=&O\left(\left|\frac{P_R(R-1)}{a_2}\right|\right).
\end{eqnarray*}
In combination of the cases $p\le p_M$ and $p>p_M$, we obtain
\begin{eqnarray*}
F(z)&=&\sum_{p\le p_M}S_{p+1}(z)+\sum_{p>p_M}S_{p+1}(z)+S_1(z)\\
&=&\sum_{p\ge
0}S_{p+1}(z_0)+\left(S(z)-\frac{1}{1-z}-S_1(z_0)\right)
-\frac{p_M}{z_0^2}\cdot\frac{P_R(R-1)}{2a_2}\\
&&+O\left(\left|\frac{P_R(R-1)}{a_2}\right|\right).
\end{eqnarray*}
Recall that $p_M$ is given by
$$p_M=\max\left\{p:\left|P_R(R-1)\right|
\le\left|\frac{a_2}{4}\cdot R_{p}\right|\right\}$$
and we need to find an appropriate representation for it. For $z\in
U_{z_0}(\epsilon,\phi)$,
$p_M\approx-\log_2\left|\frac{P_R(R-1)}{a_2}\right|$. By setting
$F_0=F(z_0)$ and
$S(z)=S(z_0)-\frac{1}{z_0^2}\frac{P_R(R-1)}{2a_2}+O(z-z_0)$, we
simplify $F(z)$ into
\begin{eqnarray*}
F(z)&=&
F_0-\frac{1}{z_0^2}\frac{P_R(R-1)}{2a_2}-\frac{p_M}{z_0^2}\frac{P_R(R-1)}{2a_2}
+O\left(\sqrt{1-\frac{z}{z_0}}\right)\\
&=&F_0+\frac{\log_2\left|\frac{P_R(R-1)}{a_2}\right|}{z_0^2}\frac{P_R(R-1)}{2a_2}
+O\left(\sqrt{1-\frac{z}{z_0}}\right)\\
&=&F_0+\frac{1}{z_0^2}\frac{P_R(R-1)}{2a_2}\log_2
\left(\frac{P_R(R-1)}{a_2}\right)+O\left(\sqrt{1-\frac{z}{z_0}}\right)\\
&=&F_0-\sqrt{\frac{P_z(z_0)}{2P_{RR}(z_0)z_0^3}}\sqrt{1-\frac{z}{z_0}}
\log_2\left(\frac{1}{1-\frac{z}{z_0}}\right)
+O\left(\sqrt{1-\frac{z}{z_0}}\right).
\end{eqnarray*}
According to Theorem~\ref{T:transfer} (Transfer Theorem), for
$n\ge 1$, the expected order of a saturated secondary structure is
thus given by
\begin{eqnarray*}
\mathbb{E}\xi_n&=&\frac{[z^n]F(z)}{[z^n]S(z)}
=\frac{-n^{-\frac{3}{2}}}{\Gamma(-\frac{1}{2})}\cdot\log_2n\cdot
z_0^{-n}\cdot z_0^2\sqrt{\frac{P_z(z_0)}{2P_{RR}(z_0)z_0^3}}
\sqrt{\frac{2\pi\,P_{RR}(z_0)}{z_0\,P_z(z_0)}}n^{\frac{3}{2}}z_0^n\\
&&\times\left(1+O\left(\frac{\log_2n}{n}\right)\right)\\
&=&\log_4n\cdot\left(1+O\left(\frac{\log_2n}{n}\right)\right),
\end{eqnarray*}
whence the proof is complete.
\end{proof}
Finally we discuss the large deviation of the random variable $\xi_n$.
\begin{theorem}
Assume we choose $0\le x\le(\frac{1}{2}-\beta)\log_4 n$ for
arbitrary $\beta>0$, then we have
$$\mathbb{P}(|\xi_n-\mathbb{E}(\xi_n)|\ge x)=O(2^{-x}).$$
\end{theorem}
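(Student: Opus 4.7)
The plan is to bound the upper tail $\mathbb{P}(\xi_n \ge \mathbb{E}\xi_n + x)$ and the lower tail $\mathbb{P}(\xi_n \le \mathbb{E}\xi_n - x)$ separately, each by $O(2^{-x})$, and then combine them. The pivotal identity is
\begin{equation*}
\mathbb{P}(\xi_n \ge p) = \frac{[z^n]S_p(z)}{[z^n]S(z)},
\end{equation*}
so that both tails reduce to a comparison of singular coefficients of $S_p(z)$ and $S(z)$ for $p$ deviating from the mean. The key observation, already implicit in the proof of Theorem~\ref{T:exporder}, is that $\mathbb{E}\xi_n \approx p_M$ when $p_M$ is evaluated at the critical scale $|1-z/z_0| \sim 1/n$. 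Consequently a deviation $p - \mathbb{E}\xi_n = \pm x$ corresponds, at that scale, to $p - p_M \approx \pm x$, and the whole argument consists in tracking how an index shift by $\pm x$ scales the singular part of $S_p(z)$ relative to that of $S(z)$.

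For the upper tail, set $p = \lceil \mathbb{E}\xi_n + x\rceil$ and apply Lemma~\ref{L:phasep} with $k = p - p_0$. Writing $\eta = P_R(R-1)/a_2$, in the regime where $|\eta/R_{p_0}|$ is small and $2^k|\eta/R_{p_0}|$ stays moderate, the binomial expansion linearises $(1+\eta/R_{p_0})^{2^k}-1$ to $2^k\eta/R_{p_0}$, and the singular part of $S_p(z)$ reduces to $R_{p_0}(z)/(z_0^2\cdot 2^k)$. Since $R(z)-R_{p_0}(z)$ counts first- and last-paired structures of order $<p_0$ and is therefore rational, hence analytic near $z_0$, $R_{p_0}(z)$ inherits the square-root singularity of $R(z)=z^2S(z)$. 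Transfer Theorem~\ref{T:transfer} then yields $[z^n]S_p(z) = O(2^{-k}[z^n]S(z))$, i.e.\ $\mathbb{P}(\xi_n \ge \mathbb{E}\xi_n + x) = O(2^{-x})$. Values of $k$ beyond the linearisation window fall under Lemma~\ref{L:largep} and give a double-exponentially smaller contribution absorbed in the same bound.

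For the lower tail I use $\mathbb{P}(\xi_n \le \mathbb{E}\xi_n - x) = 1 - \mathbb{P}(\xi_n \ge q)$ with $q = \lfloor\mathbb{E}\xi_n - x\rfloor + 1$, reducing the task to showing $[z^n](S(z)-S_q(z)) = O(2^{-x}[z^n]S(z))$. Lemma~\ref{L:expanS} exhibits in $S_q(z)$ precisely the same singular term $-\eta/(2z_0^2)$ that appears in the expansion of $S(z)$ used in the proof of Theorem~\ref{T:exporder}; upon subtraction these leading singular contributions cancel, leaving the residue dominated by the error $O(2^{q-1}|\eta|^2)$ from Lemma~\ref{L:expanS}. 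At the critical scale $|\eta| \sim 2^{-p_M}$, so for $q = p_M - x$ this error becomes $O(2^{-x}|\eta|)$, preserving the $\sqrt{1-z/z_0}$ singular shape with a new prefactor $2^{-x}$. The Transfer Theorem~\ref{T:transfer} then yields $[z^n](S(z)-S_q(z)) = O(2^{-x}[z^n]S(z))$, and hence $\mathbb{P}(\xi_n \le \mathbb{E}\xi_n - x) = O(2^{-x})$.

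The principal obstacle is achieving uniform control of all error terms simultaneously in $p$ and $z$ along the Hankel contour of Figure~\ref{F:newhankel}. The hypothesis $x \le (\frac{1}{2}-\beta)\log_4 n$ is needed exactly to guarantee that, at the critical scale, $p = \mathbb{E}\xi_n \pm x$ stays inside the window $p_M-\lambda_1 \le p \le p_M+\lambda_1+\delta$ of validity of Lemma~\ref{L:phasep}; the slack $\beta$ is what prevents $k$ from leaving this window and justifies the linearisation of $(1+\eta/R_{p_0})^{2^k}-1$. A second delicate point is certifying that the error shape $O(2^p|\eta|^2)$ of Lemma~\ref{L:expanS} admits a uniform bound of the form $O((1-z/z_0)^{1+\varepsilon})$ along the whole contour, so that Theorem~\ref{T:transfer} converts it into a genuine relative correction of order $2^{-x}$ rather than merely a pointwise bound at $z_0$.
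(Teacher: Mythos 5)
Your overall split into two tails and your choice of tools coincide with the paper's own (very terse) proof: the lower tail is handled, exactly as in the paper, by Lemma~\ref{L:expanS} plus the Transfer Theorem~\ref{T:transfer}. Your cancellation of the common singular term $-\frac{1}{z_0^2}\frac{P_R(R-1)}{2a_2}$ between $S(z)$ and $S_q(z)$, followed by transfer of the residual error, is precisely what produces the paper's intermediate estimate $\mathbb{P}(\xi_n\ge p)=1+O\bigl(2^p/\sqrt{n}\bigr)+O\bigl(p/2^p\bigr)$ for $p\le \log_4 n$; note only that you should also carry the error $O\bigl(\frac{q}{2^q}|\eta|\bigr)$ from Lemma~\ref{L:expanS} (it is the source of the $O(p/2^p)$ term), which under the hypothesis $x\le(\frac12-\beta)\log_4 n$ is $o(2^{-x})$, so your lower-tail conclusion stands.

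The upper tail is where there is a genuine gap. Your central mechanism --- linearising $(1+\eta/R_{p_0})^{2^k}-1\approx 2^k\eta/R_{p_0}$ to conclude $[z^n]S_p(z)=O\bigl(2^{-k}[z^n]S(z)\bigr)$ --- is not available in the regime that matters. By the definition of $p_M$ (one has $|\eta|\le|R_{p_M}|/4$ but $|\eta|>|R_{p_M+1}|/4$) and since $R_p$ essentially halves per step near $p_M$, the ratio $|\eta/R_{p_0}|$ with $p_0=p_M-\lambda_1$ is bounded \emph{below} by a positive constant (of order $2^{-\lambda_1}$) at the critical scale $|1-z/z_0|\sim 1/n$; hence $2^k|\eta/R_{p_0}|$ is ``moderate'' only for $k=O(1)$, whereas the nontrivial range of the theorem needs $k\approx x\to\infty$ (for bounded $x$ the bound $O(2^{-x})$ is vacuous). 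Moreover Lemma~\ref{L:phasep} is stated only for the bounded window $0\le k\le\lambda_1+\delta$, so invoking it for $k\approx x$ already requires an extension (a liberty the paper's own proof also takes). For $p=\mathbb{E}\xi_n+x$ with growing $x$ the correct mechanism is the un-linearised exponential decay of $\bigl((1+\eta/R_{p_0})^{2^k}-1\bigr)^{-1}$, equivalently Lemma~\ref{L:largep}: this is exactly what the paper exploits, obtaining $\mathbb{P}(\xi_n\ge p)=O\bigl(\exp(-\beta'2^p/\sqrt{n})\bigr)=O\bigl(\exp(-\beta'2^x)\bigr)\ll 2^{-x}$. So your one-sentence fallback to Lemma~\ref{L:largep} must carry the entire upper tail, not merely ``values of $k$ beyond the linearisation window''; with that reweighting (and granting the uniformity-along-the-contour issues that the paper likewise leaves implicit, as you honestly flag) the argument can be repaired, but as written the $2^{-k}$ geometric picture is the mechanism governing $p$ below $p_M$, not above it.
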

\begin{proof}
For $p\le \log_4 n$, Lemma~\ref{L:expanS} in combination with the Transfer
Theorem implies
$$\mathbb{P}(\xi_n\ge p)=1+O\left(\frac{2^p}{\sqrt{n}}\right)
+O\left(\frac{p}{2^p}\right).$$ For $p>\log_4n$,
Lemma~\ref{L:phasep} indicates that
$$\mathbb{P}(\xi_n\ge p)=O\left(\exp\left(-\frac{\beta'2^p}{\sqrt{n}}\right)\right)
\quad\mbox{ for }\beta'>0.$$ Consequently the theorem follows.
\end{proof}

\newpage

\end{document}